\newtheorem{theo}{Theorem}
\numberwithin{theo}{section} 
\newtheorem{lem}[theo]{Lemma}
\newtheorem{cor}[theo]{Corollary}
\newcommand{\R}{\mathbb R}
\begin{document}
\begin{center}
\section*{Reaction-diffusion transport into core-shell geometry:  Well-posedness and stability of stationary solutions}
\end{center}

\begin{center}
\large{Thomas Geert de Jong\\}
\footnotesize{Faculty of Mathematics and Physics, 
Institute of Science and Engineering, \\
Kanazawa University,
Kanazawa, Japan.\\
{\tt tgdejong@se.kanazawa-u.ac.jp}}
\\[2mm]

\large{Georg Prokert\\}
\footnotesize{Faculty of Mathematics and Computer Science,\\
Center for Analysis, Scientific Computing and Applications, \\
Eindhoven University of Technology, Eindhoven, The Netherlands. \\
{\tt g.prokert@tue.nl}}
\\[2mm]

\large{Alef Sterk\\}
\footnotesize{
Bernoulli Institute for Mathematics, Computer Science and Artificial Intelligence,\\
University of Groningen, Groningen, The Netherlands. \\
{\tt a.e.sterk@rug.nl}}
\end{center}

\begin{quote}
\noindent \textbf{Abstract:} We investigate a nonlinear parabolic reaction-diffusion equation describing the oxygen concentration in encapsulated pancreatic cells with a general core-shell geometry. This geometry introduces a discontinuous diffusion coefficient as the material properties of the core and shell differ. We apply monotone operator theory to show well-posedness of the problem in the strong form.  Furthermore, the stationary solutions are unique and asymptotically stable. These results rely on the gradient structure of the underlying PDE.

\noindent \textbf{Keywords:} parabolic PDE, reaction-diffusion, diabetes, pancreas, asymptotic stability, gradient flow, monotone operator theory.  \\
\end{quote}

\section{Introduction}

Type 1 diabetes is a chronic condition that is caused by the immune system mistakenly destroying insulin producing pancreatic Langerhans islets. Treatment requires continuous monitoring and maintaining of insulin levels via external means such as injections or insulin pump. Islet transplantation from deceased donors provides a new treatment to recover natural insulin production \cite{shapiro2017clinical}. This is not a permanent solution as a significant percentage of patients do not achieve insulin independence at 5 years \cite{shapiro2017clinical}. Furthermore, such a treatment requires immunosuppressants. As there are only few suitable donors it is desirable to increase the durability of the transplanted cells.  This could be achieved by protecting the transplanted cells with a physical barrier. Alginate is a promising candidate as it can selectively diffuse or block certain molecules \cite{GOO85}. In addition, alginates are relatively inert \cite{AUG06,espona2018tunable,espona20193d}. This novel treatment shows promising results in mammalian trials \cite{elliott2005intraperitoneal,dufrane2010alginate,alagpulinsa2019alginate, bochenek2018alginate}. Human trials are still in the preliminary phase \cite{basta2011long,calafiore2006microencapsulated,tuch2009safety}. 

King et al.~\cite{KING19,king2020corrigendum} proposed a model that describes the reaction-diffusion of oxygen through a protective shell encapsulating a core so that hypoxia can be avoided within the donor cells. The encapsulation and core of donor cells are approximately spherical \cite{MA13}. Hence, in \cite{KING19,king2020corrigendum} a spherical core and shell with common center are considered. In that paper the authors derived the governing ODE for stationary solutions and numerically computed these solutions with oxygen concentration above the hypoxia threshold.  The existence of these stationary solutions is made rigorous in \cite{JON2021topological} using topological shooting \cite{PEL95,JON20,HAS12}. The latter result considers parameters used in cell encapsulation experiments\cite{AVG07,BUCH09,MA13}. 

The oxygen concentration in the core and shell will initially be away from the stationary state. Furthermore, the geometry of the cell is only approximately spherical.  Hence, it is important for the validation of the model to determine the stability of stationary solutions for general core-shell geometry which is the goal of this work. This first requires formulating the corresponding PDE and showing its well-posedness which was not considered in \cite{KING19,king2020corrigendum}. The resulting PDE is of parabolic type. On the boundary of the outside shell we assume the oxygen concentration to be constant. As the shell and the core have different diffusion coefficients, there is a discontinuity of this coefficient at the interface separating them. This discontinuity makes the corresponding stationary problem a diffraction problem \cite{ladyzhenskaya2013boundary}. We make the natural assumption that the concentration and the flux of oxygen are continuous at the interface. Oxygen is consumed only by the donor cells in the core but not by the protective shell. This leads to a nonlinear reaction-diffusion PDE in the core, where the non-linearity corresponds to Michaelis-Menten consumption and consequently is bounded and monotone. In the shell, however, the problem reduces to linear diffusion.

Our main results for the governing PDE are
\begin{itemize}
\item[-] well-posedness of the strong form,
\item[-] uniqueness and asymptotic stability of stationary solutions. 
\end{itemize}
The results are necessary theoretical steps in the validation of the biological model.

The sum of the reaction term and the spatial differential term is a nonlinear monotone operator. Hence, for the well-posedness result we can apply classical monotone operator theory. We apply the well-known theorems of Komura and Browder-Minty \cite{zeidler2013nonlinear}.  We emphasize that the monotonicity of the nonlinear operator defining our evolution problem results from the underlying structure of the problem as a gradient flow with respect to a convex functional. This structure also enables us to show uniqueness and asymptotic stability of the stationary solution.

The governing equations are presented in Section \ref{sec:eq}. The well-posedness result is proved in Section \ref{sec:wp}. The stability results are presented in Section \ref{sec:stab}. Finally, in Section \ref{sec:conc} we conclude with remarks and an outlook on further questions.

\section{The model \label{sec:eq}}

We start with a description of the geometry of the core and its protective shell.  The domain is denoted by $\Omega \subset \mathbb{R}^N$ with $\overline{\Omega}$  compact and $N \geq 2$. For the application $N=3$ but the results hold for $N \geq 2$. Let $\Gamma$ be an $N-1$-dim $C^2$ surface that divides $\Omega$ into two open domains $\Omega_1, \Omega_2$ such that $\partial\Omega_1=\Gamma$, closed connected hypersurface, $\Omega_2$ bounded, with 2 boundary components, $S:=\partial \Omega$ and $\Gamma$, see Figure \ref{fig:cell}. We take $\Gamma,S$ piecewise $C^1$.  We refer to $\Omega_1$ as the core, $\Omega_2$ as the shell and $\Omega$ as the core-shell.

\begin{figure}[ht]
    \centering
    \includegraphics[width=7cm]{./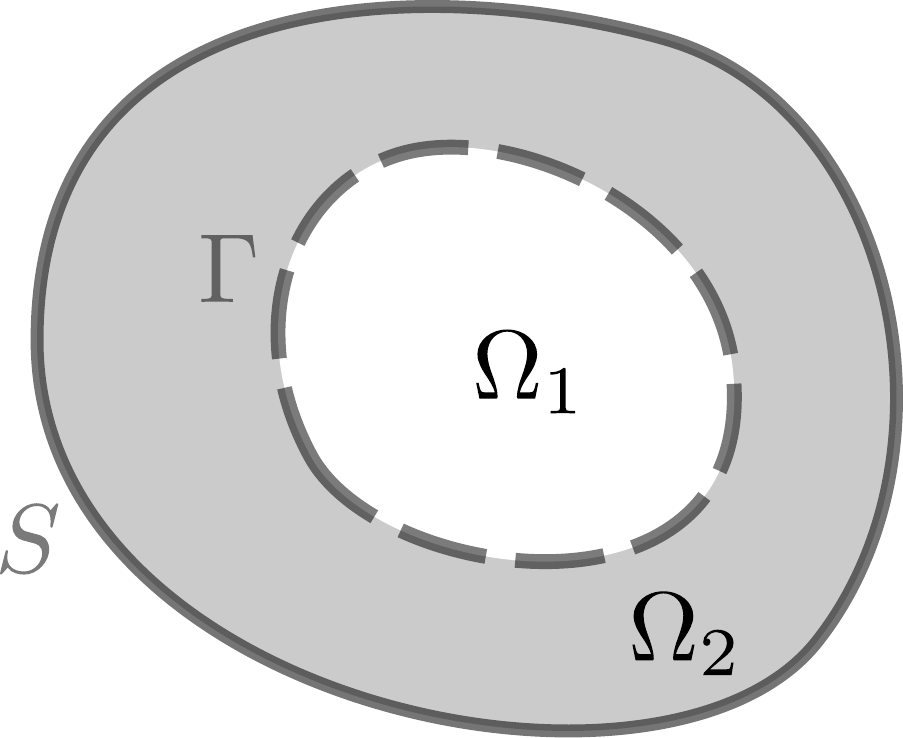}
    \caption{Core-shell geometry: $\Omega_1$ (white) corresponds to the core and $\Omega_2$ (grey) corresponds to the shell.\label{fig:cell}}
\end{figure}

On $S$ we consider a Dirichlet boundary condition, i.e. we assume that the transplanted cells are injected in an oxygen stable environment \cite{shapiro2017clinical}. At $\Gamma$ we require the concentration and flux to be continuous. Finally, on $\Omega_1$ there is a non-linear term corresponding to Michaelis-Menten consumption \cite{KING19}. It is assumed that on $\Omega_2$ there is no oxygen consumption. We also assume that Michaelis-Menten consumption is zero when the oxygen concentration is zero. This condition ensures that the non-linear term is bounded and monotone. 

The untransformed non-dimensional oxygen concentration equations are given in Appendix \ref{app:orig0}. Under a linear transformation the non-dimensional oxygen concentration equations become: 
\begin{align}
\frac{du}{dt} - b \Delta u &=  f(u) \; \; {\rm in} \; \; (\Omega \backslash \Gamma) \times (0,T), \label{eq:govp}\\
u |_S &= 0  \label{eq:boundp},\\
[u]_\Gamma &= 0 \label{eq:contup},\\
\left[ b \nabla u \cdot \nu \right]_{\Gamma} &=0, \label{eq:contfluxp} \\
u(\cdot,0) &= u_0 \; {\rm in} \; \Omega. \label{eq:ut0}
\end{align}
In Equation \eqref{eq:govp}  $b: \Omega \rightarrow \mathbb{R} $ is given by 
 \begin{align*}
b(x) = \begin{cases}
b_1 & {\rm if\; } x \in \overline{\Omega}_1, \\
b_2 & {\rm if\; }x  \in \Omega_2 ,
\end{cases}
\end{align*}
with $b_1,b_2 >0$, the map $f: L^2(\Omega)\rightarrow L^2(\Omega)$ given by 
\begin{align}
[f(u)](x)=\mathbbm{1}_{\Omega_1}(x)\varphi(u(x)),  \label{eq:newf}
\end{align}
with the $\varphi:\R \rightarrow\R$ given by
\begin{align}
\varphi(z)=\left\{\begin{array}{cl}
\frac{c_0-z}{c_1-z}&\text{ if $z\leq c_0$,}\\
0&\text{ if $z>c_0$,}
\end{array}\right. \label{eq:phi}
\end{align}
where $0 < c_0 < c_1$, see Figure \ref{fig:phi}.  \\

In \eqref{eq:contup}, \eqref{eq:contfluxp} the square brackets $[\cdot]_\Gamma$ are denoting the jump of  the quantity in the  brackets across 
$\Gamma$, i.e. the trace from $\Omega_2$ minus the trace of $\Omega_1$.  In \eqref{eq:contfluxp} $\nu$ is the normal directed towards $\Omega_2$. 

\begin{figure}[ht]
    \centering
    \includegraphics[width=8cm]{./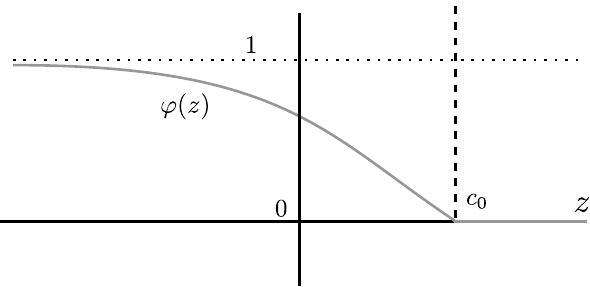}
    \caption{The function $\varphi$ is decreasing. This function models the oxygen consumption of the core cells. The value $\varphi=0$ for $z > c_0$ corresponds to zero oxygen consumption for zero oxygen concentration. In the application this means that the cells are dying from hypoxia. \label{fig:phi}}
\end{figure} 

It is straightforward to see following properties on $\varphi$: 

\begin{lem}
$\varphi$ defined in \eqref{eq:phi} satisfies 
\begin{itemize}
\item[-] $0\leq\varphi(z)<1$ for all $z\in\R$,
\item[-] $\varphi$ decreasing,
\item[-] $\varphi$ is continuous on $\R$ and smooth outside $c_0$,
\item[-] $\varphi$ is Lipschitz continuous with Lipschitz constant $1/(c_1-c_0)$, 
\item[-] $z\varphi(z)\leq c_0$ for all $z\in\R$. 
\end{itemize}
\label{lem:phiprop}
\end{lem}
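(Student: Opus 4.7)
The plan is to verify the five bulleted properties by the obvious case split between $z\le c_0$ and $z>c_0$, using the alternative representation
\[
\varphi(z)=1-\frac{c_1-c_0}{c_1-z}\quad\text{for } z\le c_0,
\]
which makes the bounds and monotonicity transparent.

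\emph{Range and monotonicity.} On $\{z\le c_0\}$, the denominator satisfies $c_1-z\ge c_1-c_0>0$, so $(c_1-c_0)/(c_1-z)\in(0,1]$, giving $\varphi(z)\in[0,1)$; on $\{z>c_0\}$ the value is $0$, which is also in $[0,1)$. As $z\uparrow c_0$ the quantity $c_1-z$ decreases, hence $(c_1-c_0)/(c_1-z)$ increases, so $\varphi$ is (strictly) decreasing on $(-\infty,c_0]$; combined with $\varphi\equiv 0$ on $[c_0,\infty)$ and the common value $\varphi(c_0)=0$ at the junction, $\varphi$ is monotone non-increasing on $\R$ (which is what is meant by ``decreasing'' here).

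\emph{Continuity, smoothness, Lipschitz constant.} Rationality of $\varphi$ on $(-\infty,c_0)$ with non-vanishing denominator and the constant value on $(c_0,\infty)$ yield smoothness away from $c_0$. The one-sided limits at $c_0$ both equal $0$, so $\varphi$ is continuous there. Differentiating on each open piece,
\[
\varphi'(z)=-\frac{c_1-c_0}{(c_1-z)^{2}}\quad\text{for } z<c_0,\qquad \varphi'(z)=0\quad\text{for } z>c_0.
\]
Since $(c_1-z)^{2}\ge(c_1-c_0)^{2}$ on $(-\infty,c_0)$, we get $|\varphi'(z)|\le 1/(c_1-c_0)$ wherever $\varphi'$ exists. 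Because $\varphi$ is continuous at $c_0$ and piecewise $C^1$, the mean value theorem, applied separately on $(-\infty,c_0]$ and $[c_0,\infty)$ and combined via the triangle inequality when the two points straddle $c_0$, delivers the global Lipschitz estimate with constant $1/(c_1-c_0)$.

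\emph{The bound $z\varphi(z)\le c_0$.} Split into three subcases. If $z\le 0$, then $\varphi(z)\ge 0$ gives $z\varphi(z)\le 0\le c_0$. If $0<z\le c_0$, then $\varphi(z)<1$ yields $z\varphi(z)<z\le c_0$. If $z>c_0$, then $\varphi(z)=0$ and the inequality is trivial. None of these steps presents any real obstacle; the only point requiring care is the Lipschitz argument across the junction $c_0$, which is handled by the piecewise mean value theorem together with continuity at $c_0$.
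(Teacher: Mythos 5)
Your proof is correct and complete. The paper itself offers no proof of this lemma (it simply declares the properties ``straightforward to see''), so there is no authorial argument to compare against; your verification supplies exactly the missing details. The rewriting $\varphi(z)=1-\tfrac{c_1-c_0}{c_1-z}$ on $\{z\le c_0\}$ is a clean device for the range and monotonicity claims, the derivative bound $|\varphi'(z)|\le 1/(c_1-c_0)$ is right, and you correctly handle the only genuinely delicate point, namely propagating the Lipschitz estimate across the junction at $c_0$ via continuity and the triangle inequality (using that $c_0$ lies between the two points, so the two increments sum to $|z_1-z_2|$). The three-way case split for $z\varphi(z)\le c_0$ is also sound, relying on $c_0>0$ in the first case.
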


\section{Well-posedness results \label{sec:wp}}

Define $V:= H^{1}_0(\Omega), H:=L^2(\Omega)$. So we have the evolution triplet 
\[
V \subset \subset H=H^\ast  \subset V^\ast,
\]
where $H^\ast$ and $V^\ast$ denote the dual space of $H$ and $V$, respectively. Moreover, we write $V\subset\subset H$ to emphasize the compactness of the embedding of $V$ in $H$. The inner product on $V$ is defined by $(u,v)_V = (u,v)_H + (\nabla u, \nabla v)_H$. Denote by  $\langle \cdot, \cdot \rangle$ the pairing between $V^{*}$ and $V$. 

Define the nonlinear operator $A : D(A) \rightarrow H$ given by
\begin{align}
A(u) = -b \Delta u - f(u), \label{eq:A}
\end{align}
with 
\[
D(A) := \{ u \in V \; : \; u|_{\Omega_i} \in H^2(\Omega_i), \; \; u \;\; {\rm satisfies} \;\; \eqref{eq:contfluxp} \}.
\]

We consider the equation
\begin{align}
\frac{du}{dt} +A(u) =0 , \label{eq:nonlingov}
\end{align}
as an equality in $L^2(0,T;H)$. 

\begin{theo}
For $u(0)=u_0 \in D(A)$ Equation \eqref{eq:nonlingov}  has a unique solution $u \in C^0(0,T ;  H)$ for any $T>0$  with
\[
u \in {\rm Lip}(0, T; D(A)) , \qquad \frac{du}{dt} \in C_w(0, T; H).
\]
Furthermore, $u_0 \mapsto u(t) \in C^0(D(A),D(A))$.  \label{theo:wp}
\end{theo}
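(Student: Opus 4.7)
The strategy is classical monotone operator theory: I would show that $A$ is maximal monotone on the Hilbert space $H$ and then invoke Komura's theorem \cite{zeidler2013nonlinear} to produce the nonlinear contraction semigroup generated by $-A$; the gradient/convex structure emphasised in the introduction is automatic once monotonicity is in place. Monotonicity has two independent sources: the principal elliptic part is monotone by integration by parts against the interface and boundary conditions, and since $\varphi$ is decreasing (Lemma \ref{lem:phiprop}), the nonlinear term $-f$ is also monotone. Concretely, for $u,v\in D(A)$,
\[
\langle A(u)-A(v), u-v\rangle_H = -\int_\Omega b\,\Delta(u-v)\,(u-v)\,dx - \int_{\Omega_1}\bigl(\varphi(u)-\varphi(v)\bigr)(u-v)\,dx.
\]
Integrating the first term by parts on $\Omega_1$ and $\Omega_2$ separately produces boundary contributions on $S$, which vanish by \eqref{eq:boundp}, and on $\Gamma$, where \eqref{eq:contup} and \eqref{eq:contfluxp} force the two $\Gamma$-pieces to cancel. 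What remains is $\int_\Omega b|\nabla(u-v)|^2\,dx\geq 0$, and the second integral is nonnegative because $\varphi$ is decreasing.

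For maximal monotonicity it then suffices to show $R(I+A)=H$. Given $h\in H$ I would study the weak problem on $V$: find $u\in V$ with
\[
(u,v)_H + \int_\Omega b\,\nabla u\cdot\nabla v\,dx - \int_{\Omega_1}\varphi(u)\,v\,dx = (h,v)_H \qquad \forall v\in V.
\]
The induced operator $V\to V^*$ is monotone by the same calculation (with no boundary terms this time), bounded and hemicontinuous since $\varphi$ is Lipschitz, and coercive using $\inf_\Omega b>0$, the Poincar\'e inequality on $V=H^1_0(\Omega)$, and the uniform bound $0\leq\varphi\leq 1$. Browder--Minty \cite{zeidler2013nonlinear} then yields a solution $u\in V$. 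To upgrade this variational solution to membership in $D(A)$, I would invoke elliptic regularity for the diffraction problem from \cite{ladyzhenskaya2013boundary} applied on each subdomain, using that $\Gamma$ is $C^2$: this gives $u|_{\Omega_i}\in H^2(\Omega_i)$ and \eqref{eq:contfluxp} realised as an identity between $H^{1/2}$-traces on $\Gamma$. Hence $u\in D(A)$ and $I+A$ is surjective, so $A$ is maximal monotone.

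With $A$ maximal monotone and densely defined on $H$, Komura's theorem then produces, for every $u_0\in D(A)$, a unique $u\in C^0([0,T];H)$ that is Lipschitz in $t$ with values in $H$, satisfies $u(t)\in D(A)$ and $\|A(u(t))\|_H\leq\|A(u_0)\|_H$ for all $t$, and solves $du/dt+A(u)=0$ a.e.\ with $du/dt\in L^\infty(0,T;H)$. The uniform $H$-bound on $A(u(t))$ together with the regularity identification from the previous paragraph underwrites the Lip$(0,T;D(A))$ claim once $D(A)$ carries the ambient $H$-topology; weak continuity of $du/dt$ follows from its $L^\infty$-bound combined with strong right-continuity of the right-derivative $d^+u/dt=-A(u(t))$; and continuous dependence $u_0\mapsto u(t)\in C^0(D(A),D(A))$ is immediate from the nonexpansive estimate $\|u(t;u_0)-u(t;\tilde u_0)\|_H\leq\|u_0-\tilde u_0\|_H$ built into the semigroup. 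I expect the main technical obstacle to be the interface-regularity step used in proving surjectivity of $I+A$: bootstrapping a weak $V$-solution to $H^2$-regularity on each subdomain with the flux-jump realised in $H^{1/2}$-traces across $\Gamma$, where $b$ is discontinuous. This is precisely what the diffraction-problem theory in \cite{ladyzhenskaya2013boundary} provides under the $C^2$ regularity of $\Gamma$; once granted, everything else is a clean application of Browder--Minty and Komura.
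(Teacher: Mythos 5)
Your proposal is correct and follows essentially the same route as the paper: monotonicity of $A$ via integration by parts over the subdomains together with $\varphi$ decreasing, surjectivity of $I+A$ via Browder--Minty applied to the associated operator $V\to V^*$ (monotone, coercive, hemicontinuous), elliptic regularity on each $\Omega_i$ to place the weak solution in $D(A)$, and finally Komura's theorem (Zeidler's Theorem 31.A and Corollary 31.1) for existence, uniqueness, regularity in time, and continuous dependence. The only cosmetic difference is that you cite the diffraction-problem regularity theory explicitly and use $0\leq\varphi\leq 1$ for coercivity where the paper uses $z\varphi(z)\leq c_0$; both work.
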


\begin{proof} We will apply Theorem 31.A from \cite{zeidler2013nonlinear}. Recall that $A$ is called monotone if $( {A}(u)-{A}(v),u-v )_H \geq 0 $ for all $u,v \in D(A)$. The assumptions to check are:
\begin{itemize}
\item[(H1):] $A$ is monotone,
\item[(H2):] $R(I+A)=H$.
\end{itemize}
Proof (H1): Since $\varphi$ is decreasing we obtain
\begin{align}
( {A}(u)-{A}(v),u-v )_H = \int_\Omega b \nabla (u-v) \cdot \nabla (u-v) dx   - \int_{\Omega_1} \underbrace{( \varphi(u)- \varphi(v))(u-v)}_{\leq 0}  dx \geq 0 \label{eq:H1}
\end{align}
for $u,v \in D(A)$. 

Proof (H2): We consider $\tilde{A}: V \rightarrow V^\ast$  given by 

\[
\langle \tilde{A}(u),w \rangle := \int_\Omega \left( uw + b \nabla u \nabla w \right) dx - \int_{\Omega_1} \varphi(u) w dx\; \; u,w \in V.
\]

Recall that $\tilde{A}$ is called hemicontinuous if the real function $t \mapsto \langle \tilde{A}(u+tv),w \rangle$ is continuous on $[0,1]$  for all $u,v,w \in X$ and that $\tilde{A}$ is called coercive if 
\begin{align}
\frac{\langle \tilde{A}(u) , u \rangle}{\| u \|_V} \rightarrow + \infty  \;\; {\rm as \;\;} \|u \|_V \rightarrow \infty. \label{def:coer}
\end{align}
We apply Theorem 26.A  \cite{zeidler2013nonlinear} to $\tilde{A}$ which gives that for fixed $g \in H$ there exists an $u \in V$ such that $\tilde{A} u =g$. The assumptions to check are: 

\begin{itemize}
\item[-] $\tilde{A}$ monotone:  Take $u,v \in V$ then similarly to \eqref{eq:H1} we have that 
\[
\langle \tilde{A}(u)- \tilde{A}(v), u-v \rangle  \geq \int_\Omega b \nabla (u-v) \cdot \nabla (u-v) dx   - \int_{\Omega_1} ( \varphi(u)- \varphi(v))(u-v) dx \geq 0 .
\]
\item[-] $\tilde{A}$ coercive:  Recall from Lemma \ref{lem:phiprop} that  $\varphi(z)z \leq c_0$  for all $z \in \R$. Hence, we have that
\[
\langle \tilde{A}(u) , u \rangle = \int_\Omega  (u^2 + b | \nabla u |^2) dx - \int_{\Omega_1} \varphi(u) u dx \geq c \| u \|_V^2- |\Omega_1| c_0
\] 
and therefore we have \eqref{def:coer}.
\item[-] $\tilde{A}$ hemicontinuous: for $t,s \in [0,1]$ $u,v,w \in V$, the Lipschitz continuity of $\varphi$ as established in Lemma \ref{lem:phiprop}, we have 
\begin{align*}
|\langle \tilde{A}(u + tv), w \rangle - \langle \tilde{A}(u + sv), w \rangle | & \leq  C \| v \|_V \|w\|_V |t-s| \\
& \qquad  + \left| \int_{\Omega_1} \left( \varphi(u+tv)- \varphi(u+sv) \right) w dx \right| \\
& \leq C \| v \|_V \|w\|_V |t-s| + L \| v \|_H \|w \|_H |t-s|
\end{align*}
This implies the continuity of $t \mapsto \langle \tilde{A}(u + tv), w \rangle$
\end{itemize}

Hence, for fixed $g \in H$ there exists an $u \in V$ such that $\tilde{A} u =g$, i.e.
\begin{align}
\int_\Omega \nabla u \nabla w dx = \int_\Omega (-u + f(u)+g)w dx \; \; \forall w \in V. \label{eq:tildeAuisg}
\end{align}
It remains to show that $u \in D(A)$. Since \eqref{eq:tildeAuisg}, $-u + f(u)+g \in H$, and  $b$ is constant on $\Omega_i$ we can adapt standard arguments on regularity of weak solutions to elliptic boundary value problems to obtain
\[
u|_{\Omega_i} \in H^2(\Omega_i) \; \;{\rm and} \;  \; b \Delta u = -u + f(u) + g \;\; {\rm a.e.} \; {\rm in} \; \Omega_i, \;i=1,2. 
\] 
Hence, applying integration by parts in the subdomains,
\begin{align*}
\int_\Omega b\nabla u\nabla w\,dx
&=\int_{\Omega_1} b\nabla u\nabla w\,dx+\int_{\Omega_2} b\nabla u\nabla w\,dx\\
&=-\int_{\Omega_1} b\Delta u w\,dx-\int_{\Omega_2} b\Delta u w\,dx-\int_\Gamma[b\nabla u\cdot\nu]_\Gamma w\,ds\\
&=\int_\Omega (-u+f(u)+g)w\,dx-\int_\Gamma[b\nabla u\cdot\nu]_\Gamma w\,ds\quad\forall w\in V,
\end{align*}
so by \eqref{eq:tildeAuisg}, the boundary integral vanishes for all $w\in V$ which implies \eqref{eq:contfluxp}, and $u\in D(A)$ is proved.\\

Finally, applying Corollary 31.1 from \cite{zeidler2013nonlinear} gives that $u_0 \mapsto u(t) \in C^0(D(A),D(A))$.
\end{proof}

\section{Stability results \label{sec:stab}}

We define the functional $E: V \rightarrow \R$ given by 
\begin{align}
E(u) = \int_\Omega \frac12 b |\nabla u|^2 dx  - \int_{\Omega_1} F(u) dx, \label{eq:J}
\end{align}
where 
\begin{align}
F(s) = \begin{cases} 
s + (c_1-c_0) \log{\frac{c_1 -s}{c_1}} & \; {\rm if} \quad s  \leq c_0, \\
c_0 + (c_1-c_0) \log{\frac{c_1 -c_0}{c_1}} & \; {\rm if} \quad  s  >  c_0.
\end{cases}
\end{align}

\begin{lem}
$E$ is Fr\'{e}chet differentiable with derivative at $u$ given by
\[
E'(u)[h] = \int_{\Omega} b \nabla u \nabla h dx - \int_{\Omega_1} \varphi(u)h dx.  
\]
\label{lem:Jcrit}
\end{lem}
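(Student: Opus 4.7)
The plan is to split $E$ into its quadratic part $E_1(u) := \frac{1}{2}\int_\Omega b|\nabla u|^2\,dx$ and its nonlinear part $E_2(u) := -\int_{\Omega_1} F(u)\,dx$, and to compute the Fréchet derivative of each summand separately. The claim on $E_1$ is essentially a calculation: expanding $|\nabla(u+h)|^2 = |\nabla u|^2 + 2\nabla u\cdot\nabla h + |\nabla h|^2$ yields the candidate derivative $h\mapsto \int_\Omega b\nabla u\cdot\nabla h\,dx$, and the remainder $\frac{1}{2}\int_\Omega b|\nabla h|^2\,dx$ is bounded by $\tfrac{1}{2}\max(b_1,b_2)\|h\|_V^2$, which is $o(\|h\|_V)$. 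Continuity of the derivative as a linear functional on $V$ follows from Cauchy--Schwarz in the same way.

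For $E_2$, I would first verify that $F$ is $C^1$ on $\R$ with $F'=\varphi$. This amounts to differentiating the two branches of $F$ (on the lower branch, $F'(s)=1-(c_1-c_0)/(c_1-s)=(c_0-s)/(c_1-s)$), checking that the two pieces match at $s=c_0$ (where both give $0$), and noting that $F$ itself is continuous there by construction. Combined with Lemma \ref{lem:phiprop}, this gives $F\in C^1(\R)$ with a derivative $\varphi$ that is bounded by $1$ and Lipschitz with constant $L=1/(c_1-c_0)$.

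The key step is then the remainder estimate. By the fundamental theorem of calculus,
\[
F(u+h)-F(u)-\varphi(u)h \;=\; \int_0^1\bigl(\varphi(u+th)-\varphi(u)\bigr)h\,dt,
\]
and the Lipschitz bound on $\varphi$ gives the pointwise estimate $|F(u+h)-F(u)-\varphi(u)h|\leq \tfrac{L}{2}h^2$. Integrating over $\Omega_1$ and using the continuous embedding $V\hookrightarrow H$ yields
\[
\Bigl|E_2(u+h)-E_2(u)+\int_{\Omega_1}\varphi(u)h\,dx\Bigr|\leq \tfrac{L}{2}\|h\|_{L^2(\Omega_1)}^2\leq C\|h\|_V^2,
\]
which is $o(\|h\|_V)$. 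Boundedness of the candidate derivative follows from $|\varphi|\leq 1$ and Cauchy--Schwarz: $|\int_{\Omega_1}\varphi(u)h\,dx|\leq |\Omega_1|^{1/2}\|h\|_H\leq C\|h\|_V$. Adding the two derivatives gives the stated formula.

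There is no serious obstacle here; the only point that deserves care is the matching of the two branches of $F$ at $s=c_0$ when verifying $F\in C^1$, since the argument via the fundamental theorem of calculus relies on this global smoothness.
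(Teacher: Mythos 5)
Your proof is correct and follows essentially the same route as the paper: split $E$ into the quadratic part and the superposition term, and bound the remainder of the latter pointwise by a constant times $h^2$ before integrating and using $V\hookrightarrow H$. The only cosmetic difference is that the paper obtains the pointwise bound $\tfrac12\|F''\|_\infty\zeta^2$ from the second-order Taylor formula with the weak second derivative of $F$, whereas you get the same bound from the fundamental theorem of calculus plus the Lipschitz continuity of $\varphi=F'$; these are interchangeable.
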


\begin{proof} The first term in \eqref{eq:J} is a quadratic term on $V$.  $F$ has a bounded, integrable weak second derivative. Hence, for $z, \zeta \in \R$ we have 
\[
\left| F(z + \zeta) - F(z) - F'(z) \zeta \right| = \left|  \zeta^2 \int^{1}_0 (1-s)F''(z+s\zeta)ds \right| \leq \frac{1}{2} \| F''\|_\infty \zeta^2.
\]
Thus, for $u,h \in V$
\[
\left| \int_{\Omega_1} F(u+h) - F(u) - F'(u)h \right| \leq \frac{1}{2} \|F''\|_\infty \| h \|_V^2,
\]
and $u \mapsto \int_{\Omega_1}F(u)dx$ is Fr\'{e}chet differentiable with derivative given by 
\[
h \mapsto  \int_{\Omega_1} F'(u)h dx  =  \int_{\Omega_1} \varphi(u)h dx .
\]
\end{proof}

\begin{lem}  $E'$ is strongly monotone, i.e. for all $u,v \in V$ there exists $\gamma >0$ such that
\[
(E'(u)-E'(v))[u-v]  \geq \gamma \Vert u-v \Vert_V^2.
\]
\label{lem:strongmono}
\end{lem}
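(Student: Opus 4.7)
The plan is to start from the explicit formula for $E'$ given by Lemma \ref{lem:Jcrit} and directly compute
\[
(E'(u)-E'(v))[u-v] = \int_\Omega b\,|\nabla(u-v)|^2\,dx - \int_{\Omega_1}(\varphi(u)-\varphi(v))(u-v)\,dx.
\]
This is the same algebraic manipulation that appeared in the proofs of (H1) and the monotonicity of $\tilde{A}$ in Section \ref{sec:wp}, so no new ideas are needed for this step.

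Next I would exploit the two pieces separately. Since $\varphi$ is decreasing (Lemma \ref{lem:phiprop}), the integrand $(\varphi(u)-\varphi(v))(u-v)$ is pointwise nonpositive on $\Omega_1$, so the second term is nonnegative and can simply be dropped. For the first term I would use the uniform lower bound $b(x)\geq b_{\min}:=\min(b_1,b_2)>0$ on $\Omega$, which yields
\[
(E'(u)-E'(v))[u-v] \geq b_{\min}\,\|\nabla(u-v)\|_H^2.
\]

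Finally, I would convert the gradient-norm lower bound into a $V$-norm lower bound by invoking the Poincar\'{e} inequality. This is where the Dirichlet boundary condition built into $V=H^1_0(\Omega)$ is essential: since $u-v\in H^1_0(\Omega)$ and $\Omega$ is bounded, there is a constant $C_P>0$ with $\|w\|_H^2\leq C_P\|\nabla w\|_H^2$ for all $w\in V$, so
\[
\|u-v\|_V^2 = \|u-v\|_H^2 + \|\nabla(u-v)\|_H^2 \leq (C_P+1)\,\|\nabla(u-v)\|_H^2.
\]
Combining these inequalities yields the claim with $\gamma := b_{\min}/(C_P+1)$.

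There is no real obstacle here; the proof is essentially a one-liner once the ingredients are assembled. The only point worth flagging is that strong monotonicity holds with respect to the full $V$-norm (not merely the seminorm $\|\nabla\cdot\|_H$) precisely because Poincar\'{e} is available on $H^1_0(\Omega)$, so no further hypothesis on the geometry beyond boundedness of $\Omega$ is required.
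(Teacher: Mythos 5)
Your proposal is correct and follows essentially the same route as the paper: expand $(E'(u)-E'(v))[u-v]$, discard the nonnegative reaction term using that $\varphi$ is decreasing, bound $b$ below by $\min(b_1,b_2)$, and apply Poincar\'{e}'s inequality on $H^1_0(\Omega)$ to pass from the gradient seminorm to the full $V$-norm. You merely make explicit the constant $\gamma=b_{\min}/(C_P+1)$ that the paper leaves implicit.
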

\begin{proof}  Take $u,v \in V$, then using $\varphi$ decreasing  and Poincar\'{e}'s inequality we obtain
\begin{align*}
(E'(u)-E'(v))[u-v] &= \int_{\Omega} b (\nabla u -  \nabla v) \cdot (\nabla u -  \nabla v) dx - \int_{\Omega_1} \underbrace{( \varphi(u)- \varphi(v))(u-v)}_{\leq 0} dx , \\
&  \geq \gamma \Vert u-v \Vert_V^2 .  
\end{align*}
\end{proof}

From Lemma \ref{lem:Jcrit} it follows that we can write \eqref{eq:nonlingov} as a gradient flow:
\begin{align*}
u_t = - \nabla E(u),
\end{align*}
in the sense that
\begin{align*}
(u_t,w)_H = -E'(u)[w]. 
\end{align*}
Recall that $u^\ast \in V$ is called a critical point if $E'(u^\ast)v =0$. Hence, stationary solutions are critical points of $E$.

\begin{theo} There is precisely one stationary solution to \eqref{eq:nonlingov}.
\label{theo:uniquestat}
\end{theo}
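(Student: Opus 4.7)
The plan is to identify stationary solutions with zeros of $E'$ (as noted directly after Lemma \ref{lem:strongmono}) and then use the strong monotonicity of $E'$ together with a Browder--Minty-type surjectivity result to pin down exactly one such zero.

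Uniqueness is then immediate from Lemma \ref{lem:strongmono}: if $u_1^{*}, u_2^{*} \in D(A)$ are two stationary solutions, both satisfy $E'(u_i^{*}) = 0$ in $V^{*}$, so evaluating the difference on $u_1^{*} - u_2^{*}$ yields
\[
0 \;=\; (E'(u_1^{*}) - E'(u_2^{*}))[u_1^{*} - u_2^{*}] \;\geq\; \gamma \, \| u_1^{*} - u_2^{*} \|_V^2,
\]
forcing $u_1^{*} = u_2^{*}$.

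For existence, I would apply Theorem~26.A of \cite{zeidler2013nonlinear} to the operator $E' : V \to V^{*}$, in direct parallel with its application to $\tilde{A}$ in the proof of Theorem~\ref{theo:wp}. Monotonicity of $E'$ is contained in Lemma~\ref{lem:strongmono}. Hemicontinuity follows from the Fr\'echet differentiability of $E$ (Lemma~\ref{lem:Jcrit}) combined with the Lipschitz bound on $\varphi$ from Lemma~\ref{lem:phiprop}, exactly as in the hemicontinuity step for $\tilde{A}$. Coercivity is the main computation, and it reduces to the same ingredients used in Theorem~\ref{theo:wp}: Poincar\'e's inequality together with the bound $z\varphi(z)\leq c_0$ from Lemma~\ref{lem:phiprop} give
\[
\langle E'(u), u \rangle \;=\; \int_\Omega b |\nabla u|^2 \, dx - \int_{\Omega_1} \varphi(u)\,u \, dx \;\geq\; c\|u\|_V^2 - c_0 |\Omega_1|,
\]
so $\langle E'(u), u\rangle / \|u\|_V \to \infty$ as $\|u\|_V \to \infty$. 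Theorem~26.A then furnishes a $u^{*} \in V$ with $E'(u^{*}) = 0$.

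The final step is to upgrade this weak critical point to a stationary solution in $D(A)$. Since $\mathbbm{1}_{\Omega_1}\varphi(u^{*}) \in H$, the elliptic regularity argument on each subdomain plus the interface integration-by-parts used in the proof of Theorem~\ref{theo:wp} transfer verbatim to give $u^{*}|_{\Omega_i} \in H^2(\Omega_i)$ together with the flux-matching condition \eqref{eq:contfluxp}; hence $u^{*} \in D(A)$ and $A(u^{*})=0$. I do not expect a serious obstacle: essentially everything recycles ingredients already assembled for well-posedness, the only genuinely new computation being the coercivity estimate for $E'$ above, which itself is only a mild variant of the one previously done for $\tilde{A}$.
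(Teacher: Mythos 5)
Your proposal is correct, but it takes a genuinely different route from the paper. The paper argues variationally: it shows $E$ itself is continuous, strictly convex (via Lemma \ref{lem:strongmono}) and coercive as a \emph{functional} (using $F(s)\leq |s|$ to get $E(u)\geq C\|u\|_V^2 - C\|u\|_V$), and then invokes results from convex minimization (Theorems 1.5.6--1.5.7 of \cite{badiale2010semilinear}) to conclude that $E$ has exactly one critical point, namely its global minimizer. You instead work at the level of the operator $E':V\to V^*$, establishing monotonicity, hemicontinuity and \emph{operator} coercivity ($\langle E'(u),u\rangle/\|u\|_V\to\infty$, via $z\varphi(z)\leq c_0$ and Poincar\'e) and applying Browder--Minty (Theorem 26.A) with right-hand side $0$, with uniqueness read off directly from strong monotonicity. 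Both arguments are sound and both computations check out; they are two faces of the same convexity structure. Your version has the advantage of recycling exactly the machinery already deployed for $\tilde{A}$ in Theorem \ref{theo:wp} and of avoiding a second external reference, and you additionally make explicit the regularity step upgrading the critical point $u^*\in V$ to $u^*\in D(A)$ with $A(u^*)=0$ --- a point the paper's proof leaves implicit in the identification of stationary solutions with critical points. The paper's version is slightly shorter given its cited theorems and foregrounds the interpretation of the stationary solution as the energy minimizer, which is then not used elsewhere but is conceptually aligned with the gradient-flow viewpoint of Section \ref{sec:stab}.
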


\begin{proof} If $E$ is a continuous, strictly convex, coercive functional then $E$ has  a unique critical point which is also a global minimum by Theorem 1.5.6 and 1.5.7 in \cite{badiale2010semilinear}.  $E$ is continuous. Strict convexity of $E$ follows from Lemma \ref{lem:strongmono}. So we only need to show coercivity. Observe that if $s > 0$ then $F(s) \leq s$ and if $s \leq 0$ then $F(s) \leq 0$. Hence, $F(s) \leq |s|$ for all $s \in \R$. Therefore, using Poincar\'{e}'s inequality we have that 
\[
E(u) \geq  C  \|u\|_V^2 -  C \| u \|_V.
\]
Hence, $E$ is coercive.   The result follows from Lemma \ref{lem:Jcrit}.
\end{proof}

Denote the unique stationary solution by $u^\ast \in V$.  
 
\begin{theo}[Global asymptotic stability in $H$] Let $u$ be a solution of \eqref{eq:nonlingov} then there exists a $\beta >0$ such that $\lim_{t \rightarrow \infty} {\rm e}^{\beta t} \|u(t)- u^\ast \|_H =0$.
\label{theo:asymp}
\end{theo}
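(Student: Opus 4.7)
The plan is to exploit the gradient flow structure together with the strong monotonicity of $E'$ established in Lemma \ref{lem:strongmono}. Since $u^\ast$ is a critical point, we have $E'(u^\ast) = 0$, which allows us to turn strong monotonicity into exponential contraction toward $u^\ast$.

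First, I would consider the function $\phi(t) := \|u(t) - u^\ast\|_H^2$. The regularity guaranteed by Theorem \ref{theo:wp}, namely $u \in \mathrm{Lip}(0,T; D(A))$ with $du/dt \in C_w(0,T;H)$, is enough to justify that $\phi$ is absolutely continuous on bounded intervals and satisfies
\begin{equation*}
\frac{d}{dt}\,\phi(t) \;=\; 2\bigl(u_t(t),\, u(t) - u^\ast\bigr)_H \qquad \text{a.e. } t > 0.
\end{equation*}
Using the gradient flow identity $(u_t,w)_H = -E'(u)[w]$ with the test element $w = u - u^\ast \in V$, and then inserting $E'(u^\ast) = 0$, this becomes
\begin{equation*}
\frac{d}{dt}\,\phi(t) \;=\; -2\,E'(u(t))[u(t)-u^\ast] \;=\; -2\bigl(E'(u(t)) - E'(u^\ast)\bigr)[u(t)-u^\ast].
\end{equation*}

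Next, I would apply Lemma \ref{lem:strongmono} to bound the right-hand side by $-2\gamma \|u(t) - u^\ast\|_V^2$, and then use the trivial inequality $\|w\|_V^2 \geq \|w\|_H^2$ (which follows directly from the definition of the $V$-inner product) to obtain the Grönwall-type differential inequality
\begin{equation*}
\frac{d}{dt}\,\phi(t) \;\leq\; -2\gamma\,\phi(t).
\end{equation*}
Integrating yields $\phi(t) \leq e^{-2\gamma t} \phi(0)$, hence
\begin{equation*}
\|u(t) - u^\ast\|_H \;\leq\; e^{-\gamma t}\,\|u_0 - u^\ast\|_H.
\end{equation*}
Choosing any $\beta \in (0, \gamma)$ then gives $e^{\beta t} \|u(t) - u^\ast\|_H \to 0$ as $t \to \infty$, which is the claim.

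The only real obstacle is the justification of the chain rule for $\phi$, since $u_t$ exists only in the weak-continuity sense as provided by the well-posedness theorem. This is standard: the Lipschitz regularity $u \in \mathrm{Lip}(0,T;H)$ (inherited from $\mathrm{Lip}(0,T;D(A))$ via the continuous embedding) makes $\phi$ absolutely continuous, and the identity $\frac{d}{dt}\|u - u^\ast\|_H^2 = 2(u_t, u-u^\ast)_H$ holds almost everywhere by the standard result on differentiation of Hilbert-space-valued absolutely continuous functions. Everything else is algebra with the already-established monotonicity and Poincar\'e-type estimates.
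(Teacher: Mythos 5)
Your proposal is correct and follows essentially the same route as the paper: differentiate $\|u-u^\ast\|_H^2$, use the gradient-flow identity with test element $u-u^\ast$, insert $E'(u^\ast)=0$, apply the strong monotonicity of $E'$ from Lemma \ref{lem:strongmono}, drop to the $H$-norm, and close with Gr\"onwall. The only cosmetic differences are that the paper justifies the chain-rule step by citing Theorem 7.2 of \cite{robinson2001infinite} rather than via Lipschitz regularity, and that you spell out the final integration and the choice $\beta\in(0,\gamma)$ which the paper leaves implicit.
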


\begin{proof}
 Observe that $E'(u^\ast)=0$. From $u \in L^2(0,T; V)$, $\frac{du}{dt} \in L^2(0,T; V^\ast)$ we have that
\begin{align}
\frac{d \|u \|_H^2}{dt} =2 \left(\frac{du}{dt},u \right)_H, \label{eq:uct}
\end{align}
by Theorem 7.2 p. 191 in \cite{robinson2001infinite}. Now let $u$ be the solution of \eqref{eq:nonlingov}. Then by \eqref{eq:uct} and Lemma \ref{lem:strongmono} we have that 
\begin{align*}
\frac12 \frac{d}{dt} \left(  \| u-u^\ast\|_H^2 \right) &= \left( \frac{du}{dt}, u-u^\ast \right)_H = -E'(u)[u-u^\ast], \\
&\leq   (E'(u^\ast)-E'(u))[u-u^\ast] \leq  - \gamma\| u-u^\ast\|_V^2 \leq - \gamma\| u-u^\ast\|_H^2. 
\end{align*}
\end{proof}

Asymptotic stability in the $V$-norm does not follow from Theorem \ref{theo:asymp} but we can obtain a weak $V$-stability result:

\begin{cor} $u(t) \rightharpoonup u^\ast$ in $V$ as $t \rightarrow \infty$.
\label{cor:Vweakstable}
\end{cor}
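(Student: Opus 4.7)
The plan is to exploit the gradient-flow structure combined with reflexivity of $V$ and the compact embedding $V\subset\subset H$. The key is first to show the orbit $\{u(t):t\geq 0\}$ is bounded in $V$, and then to identify the unique weak accumulation point using the $H$-convergence established in Theorem~\ref{theo:asymp}.

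First I would show that $E(u(t))$ is non-increasing along the flow. Since Lemma~\ref{lem:Jcrit} gives $\nabla E(u)=A(u)$ (in the relevant sense) and the regularity from Theorem~\ref{theo:wp} justifies the chain rule, we get
\[
\frac{d}{dt}E(u(t)) \;=\; E'(u)\!\left[\tfrac{du}{dt}\right] \;=\; -\left\|\tfrac{du}{dt}\right\|_H^2 \;\leq\; 0,
\]
so $E(u(t))\leq E(u_0)$ for all $t\geq 0$. Combining this with the coercivity estimate $E(u)\geq C\|u\|_V^2 - C\|u\|_V$ already derived in the proof of Theorem~\ref{theo:uniquestat} yields a uniform bound $\sup_{t\geq 0}\|u(t)\|_V<\infty$.

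Second, I would invoke reflexivity of $V$: given any sequence $t_n\to\infty$, the bounded sequence $\{u(t_n)\}$ admits a subsequence $\{u(t_{n_k})\}$ with $u(t_{n_k})\rightharpoonup w$ in $V$ for some $w\in V$. Using the compact embedding $V\subset\subset H$, this subsequence then converges strongly in $H$ to the same $w$. But Theorem~\ref{theo:asymp} gives $u(t_{n_k})\to u^\ast$ in $H$, forcing $w=u^\ast$.

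Finally, the Urysohn subsequence principle closes the argument: every sequence $t_n\to\infty$ has a further subsequence along which $u(t_n)\rightharpoonup u^\ast$ in $V$, and since $V$ is a Hausdorff space in the weak topology with a uniquely determined limit, the whole net $u(t)$ must converge weakly to $u^\ast$. The main (and really only) obstacle is the boundedness step; once the energy dissipation identity is in place this is routine, but it does rely on being able to differentiate $E\circ u$ in time, which is why the extra regularity $u\in\mathrm{Lip}(0,T;D(A))$ from Theorem~\ref{theo:wp} is convenient to invoke.
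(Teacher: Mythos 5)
Your argument is correct and follows essentially the same route as the paper: boundedness of the orbit in $V$ via monotonicity of $t\mapsto E(u(t))$, then reflexivity of $V$, the compact embedding $V\subset\subset H$, and identification of the weak limit with $u^\ast$ through the $H$-convergence of Theorem~\ref{theo:asymp}. The only cosmetic differences are that you extract the $V$-bound directly from the coercivity estimate $E(u)\geq C\|u\|_V^2-C\|u\|_V$ and explicitly justify the energy decay (which the paper merely asserts), whereas the paper bounds $\|u\|_V^2\leq C'(E(u)+\|u\|_H)$ and phrases the final step as a contradiction with a separating functional rather than via the subsequence principle.
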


\begin{proof} We first show that 
\begin{align}
t \mapsto u(t) \; {\rm is \; bounded \; in \;} V. \label{uVbounded}
\end{align}
We can bound $\| u \|_V$ in terms of $E(u)$ and $\| u \|_H$:
\[
\| u \|_V^2 \leq C \int_{\Omega} b | \nabla u |^2 dx = 2C \left( E(u) + \int_{\Omega_1} F(u) dx \right) \leq C' (E(u) + \|u \|_H).
\]
Now \eqref{uVbounded} follows because $E(u)$ is decreasing and $\|u\|_H$ is bounded by Theorem \ref{theo:asymp}.

Now suppose the opposite of  Corollary \ref{cor:Vweakstable}. Then, there exists $\epsilon>0$ and $\phi \in V^\ast$ and a sequence $(t_n), t_n \rightarrow \infty$ such that
\begin{align}
 | \langle  \phi  ,u(t_n) - u^\ast   \rangle |  \geq \varepsilon, \label{eq:bepsilon}
\end{align}
for all $n$. Write $u_n := u(t_n)$. Since $V$ is reflexive and $(u_n)$ is bounded according to \eqref{uVbounded}, we obtain from Alaoglu's compactness theorem that there is a subsequence again denoted by $(u_n)$ which converges weakly in $V$. As $V \subset \subset H$ the subsequence $(u_n)$ converges strongly in $H$, therefore the limit is $u^\ast$ which is in contradiction with \eqref{eq:bepsilon}.
\end{proof}

\section{Concluding remarks \label{sec:conc}}

In this work we have shown well-posedness of a nonlinear reaction-diffusion equation for general core-shell geometry. Furthermore, the corresponding stationary solutions are unique and asymptotically stable in a suitable topology. These results extend the model by \cite{KING19} which only considers stationary solution for spherical core-shell geometry.

The well-posedness theorem, Theorem \ref{theo:wp}, allows us to define a semi-dynamical system: $(D(A), \{S(t)\}_{t\geq 0})$.  This can be used to prove the existence of a global attractor following techniques in \cite{robinson2001infinite}. We expect that Theorem 10.13 from \cite{robinson2001infinite} can be applied and consequently that the global attractor is equal to the unique stationary solution. We note that the techniques from \cite{marion1987attractors,robinson2001infinite} can be used to prove well-posedness by only relying on an $L^2$ bound on $f$ and not on the monotonicity of $f$. 

In view of general results on parabolic PDE systems, we expect our evolution problem to be well-posed in H\"{o}lder spaces as well, if $\Gamma$ is smooth enough. However, the proof would be rather technical, and the improvement might not be essential from the point of view of the application. Consequently, it was not considered in this work.

Besides oxygen transport also glucose transport is needed to sustain encapsulated donor cells. In \cite{king2022coupled} a coupled glucose oxygen transport model is proposed based on biological assumptions from \cite{BUCH09,BUCH11}. As in \cite{KING19} stationary solutions are considered and a numerical study is performed to find solutions that have concentrations which are above the donor cell survival threshold. The well-posedness and stability of the corresponding PDE is a topic we would like to explore in future work.

\textbf{Acknowledgments:} During this research Thomas de Jong was also affiliated to Xiamen and University of Groningen. This research was supported by JST CREST grant number JPMJCR2014.

\section*{Appendix}
\appendix

\section{Non-dimensional oxygen concentration \label{app:orig0}}

Denote by $v=v(x,t)$ the non-dimensional concentration then the governing equations become 
\begin{align*}
\frac{dv}{dt}-b \Delta v &= -g(v) \\
v |_S &= c_0 \\
[v] |_\Gamma &= 0 \\
\left[ a \frac{\partial v }{\partial n} \right] \Big|_{\Gamma} &=0 \\
v( x,0)  &= v_0(x), 
\end{align*}
where $g: L^2(\Omega) \rightarrow L^2(\Omega)$ is defined by
\[
[g(u)](x) = \mathbbm{1}_{\Omega_1} \varphi(u(x)),
\]
with $\varphi: \R \rightarrow \R$ given by
\begin{align*}
\varphi(z) = \begin{cases} \frac{z}{z+ \hat{c}} & {\rm if \;} z \geq 0 \\
0 & {\rm else}
\end{cases}
\end{align*}
with $\hat{c}>0$. Let $u = -v+c_0$ and we obtain \eqref{eq:govp}-\eqref{eq:ut0}. Setting $c_1 := c_0 + \hat{c}$ we obtain that $[g(v)](x) = [f(u)](x)$ \eqref{eq:newf}.

\bibliographystyle{alpha}
\bibliography{abbreviated.bib}{}

\end{document}